\newtheorem{theorem}{Theorem}[section]
\newtheorem{lemma}[theorem]{Lemma}
\newtheorem{definition}[theorem]{Definition}
\newtheorem{conjecture}[theorem]{Conjecture}
\newtheorem{obs}[theorem]{Observation}
\newtheorem{question}[theorem]{Question}
\newtheorem{setup}[theorem]{Set-up}
\newtheorem{conv}[theorem]{Convention}
\newtheorem{example}[theorem]{Example}
\begin{document}
\title[Rings for which  general linear forms are exact zero divisors]
{Rings for which general linear forms are exact zero divisors}

\author{Ayden Eddings and Adela Vraciu}
\address{Ayden Eddings\\ Department of Mathematics\\ University of Nebraska-Lincoln\\ Lincoln \\ NE 68588\\ U.S.A.} \email{aeddings2@huskers.unl.edu}
\address{Adela Vraciu\\Department of Mathematics\\University of South Carolina\\
Columbia\\ SC 29208\\ U.S.A.} \email{vraciu@math.sc.edu}

\subjclass[2020]{13A02, 13C13}
\keywords{exact zero divisors, Hilbert function, Weak Lefschetz Property, monomial ideals}

\maketitle
\begin{abstract}
We investigate the standard graded $k$-algebras over a field $k$ of characteristic zero for which general linear forms are exact zero divisors. We formulate a conjecture regarding the Hilbert function of such rings. We prove our conjecture in the case when the ring is a  quotient of a polynomial ring by a  monomial idea, and also in the case when the ideal is generated in degree 2 and all but one of the generators are monomials. 
\end{abstract}

\section{Introduction}

Let $k $ be an algebraically closed field of characteristic zero, $P=k[x_1, \ldots, x_n]$ the polynomial ring over $ k $ in $n$ variables, $I\subseteq P$ a homogeneous ideal, and $R=P/I$.
A ring $R$ as above is called a standard graded $k$-algebra. The $i$th graded piece of $R$, $R_i$, consists of the images of all homogeneous polynomials in $P$ of degree $i$.

\begin{conv}
In order to avoid cumbersome notation, we will use the same symbol $f$ to denote an element of the polynomial ring $P$ as well as its image in $R$. When both rings $P$ and $R$ are used, we will specify whether $f$ is used to denote an element of $P$ or an element of $R$.  Note that $f=0$ as an element of $R$ is equivalent to $f\in I$ as an element of $P$.
\end{conv}

\begin{definition}
The Hilbert function of $R$ is the numerical function $i \to H_R(i):=\mathrm{dim}_k  (R_i)$
\end{definition}

The study of Hilbert functions has been a driving force of many important developments in commutative algebra (see for example \cite{St}, \cite{go}, \cite{gr}, \cite{hp}).
One particularly important question is the following:
\begin{question}
What numerical sequences arise as Hilbert functions of standard graded rings? What numerical sequences arise as Hilbert functions of standard graded rings with certain additional properties (e.g. reduced, Cohen-Macalay, Gorenstein)?
\end{question}
The first part of the question has been answered by Macaulay in \cite{mc}.

The aim of this paper is to study Hilbert functions of standard graded rings with the property that  general  linear forms are exact zero divisors. We reivew the relevant definitions below:
\begin{definition}
Let $x, y$ be elements of $R$. We say that the pair $(x, y)$ is a pair of exact zero divisors if 
$$
\mathrm{Ann}_R(x)=(y) \ \ \ \ \mathrm{and\ \ \ Ann}_R(y)=(x)
$$
where $(x), (y)$ denote the ideals generated by $x$ and $y$ respectively.

Note that if $R$ is an Artinian ring, then one of the two conditions above implies the other.
\end{definition}
Exact zero divisors were introduced in \cite{HS}. The main motivation for introducing this concept is that it is ``the next best thing" to regular elements, in the sense that it allows one to connect the homological behavior of modules over a quotiont of $R$ to that of modules over $R$ (see also \cite{BCG}). Here, we focus on questions about the existence of such exact zero divisors. 

We will further assume that the elements $x, y$ are homogeneous. 
A necessary condition for the Hilbert function of a standard graded algebra that has exact zero divisors of fixed degrees $d_1$ and $d_2$ was established in \cite{KSV}.
The specific question we pursue here is about existence of {\bf general}  linear exact zero divisor. In order to explain the concept of general linear form,  we identify a linear form $\displaystyle \ell =\sum_{i=1}^n a_ix_i$ with the vector of coefficients $(a_1, \ldots, a_n ) $ in the affine space $k^n$. 
\begin{definition}
We say that $R$ has general linear forms that are exact zero divisors if there exists a nonempty Zariski open set $\mathcal U$ in $k^n$ such that  every linear form $\displaystyle \ell =\sum_{i=1}^n a_ix_i$ with $(a_1, \ldots, a_n) \in \mathcal U$ is part of a pair of exact zero divisors. 
\end{definition}
Exact zero divisors provide a special type of {\em totally reflexive modules}. More specifically, if $x$ is part of a pair of exact zero divisors, then $(x)$ and $R/(x)$ are totally reflexive modules (see \cite{CFH} for a survey of totally reflexive modules and related topics). Thus, the question of existence of exact zero divisors is closely related to the question of existence of non-free totally reflexive modules. 
For Gorenstein Artininan ring $R$, every $R$-module is totally reflexive. However, if the ring is not Gorenstein, then there are two possibilities: either there are infinitely many indecomposable non-isomorphic totally reflexive modules, or else the only totally reflexive modules are the free modules (this is the main result  in \cite{CPST}).  There is in general no known criterion for how to distinuguish between these two cases. 

When $R_{\ge 3}=0$, the following necessary condition for existence of non-free totally reflexive modules is given by Yoshino:
\begin{theorem}\label{Yo}\cite{Yo}

Assume that $R$ as above is not Gorenstein and has $R_{\ge 3}=0$. If there exist non-free totally reflexive $R$-modules (in particular, if there exist exact zero divisors in $R$), then the following conditions must be satisfied:
\begin{enumerate}
\item $\mathrm{dim}_k(R_2)=\mathrm{dim}_k(R)_1 -1$

\item The ideal $I$ must be generated by homogeneous polynomials of degree 2. 
\end{enumerate}
\end{theorem}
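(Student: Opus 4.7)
The plan is to exploit a non-free totally reflexive $R$-module $M$ by studying its complete minimal free resolution and extracting constraints from the hypothesis $\mathfrak{m}^3 = 0$, where $\mathfrak{m} = R_{\geq 1}$. First I would reduce: since syzygies of totally reflexive modules are totally reflexive, and a non-free totally reflexive module has infinite projective dimension (else $\mathrm{Gdim}(M) = 0$ together with $\mathrm{pd}_R(M) < \infty$ forces $M$ free), I may replace $M$ by its first syzygy. Then $M \subseteq \mathfrak{m} F$ for some free $F$, whence $\mathfrak{m}^2 M \subseteq \mathfrak{m}^3 F = 0$, so $M$ is a module over $R/\mathfrak{m}^2$.

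Next, take a minimal free resolution $\cdots \to F_2 \to F_1 \to F_0 \to M \to 0$. Total reflexivity gives $\mathrm{Ext}^i_R(M,R) = 0$ for all $i \geq 1$, so the dualized complex is exact and minimal and splices with the original into a doubly infinite complete resolution. Each differential $\phi_i$ has entries in $\mathfrak{m}$ by minimality and, using $\mathfrak{m}^3 = 0$, decomposes as $\phi_i = L_i + Q_i$, where $L_i$ is a matrix of linear forms and $Q_i$ a matrix of quadratic forms. Expanding $\phi_{i-1}\phi_i = 0$, the cross terms $L_{i-1}Q_i$, $Q_{i-1}L_i$, and $Q_{i-1}Q_i$ all vanish because their entries lie in $\mathfrak{m}^3 = 0$, forcing $L_{i-1} L_i = 0$. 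Thus the linear parts $(L_\bullet)$ themselves assemble into a complex, a kind of ``linearization'' of the complete resolution that I expect to be the main technical tool.

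For conclusion (1), set $r = \dim_k R_1$ and $s = \dim_k R_2$, so $H_R(t) = 1 + rt + st^2$. The exactness of the complete resolution yields Euler-characteristic identities relating the graded Betti numbers $b_i$ to $H_R(t)$, while the linear-parts complex $(L_\bullet)$ supplies the additional rank constraint needed to pin down $s$. The non-Gorenstein hypothesis excludes the Hilbert function $(1,r,1)$, i.e.\ rules out $s = 1$, and a careful rank count on $L_\bullet$ together with the identities from $(\phi_\bullet)$ then forces $s = r - 1$. For conclusion (2), after normalizing the presentation so that $I \subseteq \mathfrak{m}_P^2$ (no linear generators, which can always be arranged by choosing a minimal $P$), the vanishing $R_3 = 0$ gives $I_3 = P_3$, and absence of degree-$3$ minimal generators of $I$ amounts to showing $P_1 \cdot I_2 = P_3$. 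Using $\dim_k I_2 = \binom{r+1}{2} - (r-1)$ from (1), this reduces to surjectivity of the multiplication map $P_1 \otimes I_2 \to P_3$, which I would deduce from the same rank constraints on $(L_\bullet)$ used in (1).

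The hardest step will be the Hilbert-series bookkeeping that isolates the precise equality $s = r-1$: the complete resolution is doubly infinite, so to extract a finite-dimensional identity one must invoke both the exactness of $(\phi_\bullet)$ and the nontriviality of the auxiliary complex $(L_\bullet)$, all while ruling out the Gorenstein case $s = 1$. The ``reduce to a syzygy'' step and the extraction of the linear-parts complex from $\mathfrak{m}^3 = 0$ are essentially formal, and once (1) is established, (2) should follow from a dimension count that is more routine but still dependent on the structure imposed by the existence of $M$.
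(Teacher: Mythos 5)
The paper does not prove this statement: it is quoted from Yoshino \cite{Yo}, so there is no in-paper argument to compare against, and your proposal must stand on its own. As a proof it does not: it is a plan that stops exactly where the theorem lives. The preparatory reductions are correct and are indeed the standard opening moves (they appear in \cite{br} as well as in \cite{Yo}): passing to a syzygy to get $\mathfrak{m}^2M=0$, splicing the dualized resolution into a doubly infinite complete resolution, and extracting the linear-part complex $L_\bullet$ with $L_{i-1}L_i=0$ from $\mathfrak{m}^3=0$. But both conclusions are then left as intentions --- ``a careful rank count \dots then forces $s=r-1$'' and ``I would deduce [surjectivity of $P_1\otimes I_2\to P_3$] from the same rank constraints.'' That bookkeeping --- running the length identities $\lambda(M_{i+1})=b_i(1+r+s)-\lambda(M_i)$ along the complete resolution, exploiting that both $M$ and $M^*$ occur as syzygies so that the same constraints hold on both sides, and solving the resulting system of inequalities --- is the entire content of the theorem, and you have not supplied it.

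There is also a concrete error in the one place you do invoke the non-Gorenstein hypothesis: it does not ``exclude the Hilbert function $(1,r,1)$.'' Non-Gorenstein Artinian algebras with $\dim_kR_2=1$ exist, for instance $k[x,y]/(xy,y^2,x^3)$, which has Hilbert function $(1,2,1)$ and a two-dimensional socle. The correct implication runs the other way: Gorenstein with $R_2\neq 0$ forces $\dim_kR_2=1$, since $R_2$ lies in the socle. The non-Gorenstein hypothesis is genuinely needed, but for a different reason --- over an Artinian Gorenstein ring such as $k[x]/(x^3)$ every module is totally reflexive, so without the hypothesis the conclusion $\dim_kR_2=\dim_kR_1-1$ is simply false --- and locating where it enters the length count is precisely one of the details your outline leaves unresolved.
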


We point out that the necessary conditions in Yoshino's theorem are not sufficient. Some examples or rings that satisfy the conditions in  Theorem (\ref{Yo}) but do not  have non-free totally reflexive modules can be found in  in \cite{AV}.

The condition that $\mathrm{dim}_k(R_2)=\mathrm{dim}_k(R_1)-1$ equivalent to the condition that the number of generators of $I$ is equal to 
$$
N=\left(\begin{array}{c} n+1 \\ 2 \\ \end{array}\right) -n+1
$$
If we allow $f_1, \ldots, f_N$ to be general homogeneous polynomials of degree 2 (meaning that the vector obtained by putting all their coefficients together belongs to a Zariski open set in the corresponding affine space), it is shown in \cite{co1} that $R$ has  exact zero divisors (and therefore non-free totally reflexive modules).

Additionally, the following result from \cite{br} shows that the existence of one pair of exact zero divisors for a ring $R$ is a ring as in Yoshino's theorem (with $R_{\ge 3}=0$) implies that general linear forms are exact zero divisors.
\begin{theorem}\label{br}\cite{br}
Let $R$ be a $k$-algebra with $R_{\ge 3}=0$. If $R$ has a pair of exact zero divisors, then a general linear form is part of a pair of exact zero divisors. 
\end{theorem}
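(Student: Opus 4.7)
The plan is to rephrase ``the linear form $x \in R_1$ is part of a pair of exact zero divisors'' as a Zariski open condition on the vector of coefficients of $x$, and then to invoke the hypothesis to guarantee that this open locus is nonempty. Write $n = \dim_k R_1$; since the existence of exact zero divisors implies the existence of non-free totally reflexive modules, Theorem \ref{Yo} forces $\dim_k R_2 = n-1$. The first step is to translate the exact zero divisor condition into a rank condition on multiplication. Since $R_{\geq 3} = 0$, the annihilator of any $x \in R_1$ automatically contains $R_2$, so $\mathrm{Ann}_R(x) = \ker(\mu_x) \oplus R_2$, where $\mu_x : R_1 \to R_2$ denotes multiplication by $x$. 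For a nonzero linear form $y$, on the other hand, $(y) = ky \oplus yR_1$. Matching these graded decompositions shows that $(x,y)$ is a pair of exact zero divisors with both elements in $R_1$ if and only if \textbf{(a)} $\mu_x$ is surjective, forcing $\ker \mu_x$ to be one-dimensional, and \textbf{(b)} that kernel is spanned by some $y$ for which $\mu_y$ is also surjective. (The Artinian remark in the definition of exact zero divisor reduces a two-sided annihilator check to one side, and a short degree count using $\dim R_2 = n-1$ rules out partners of degree other than one in all but trivial cases.)

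The second step is to show that the locus $U \subseteq R_1 \cong k^n$ cut out by (a) and (b) is Zariski open. Fixing bases of $R_1$ and $R_2$, the map $\mu_x$ is represented by an $(n-1)\times n$ matrix $M_x$ whose entries are linear forms in the coordinates of $x$, and (a) is exactly the condition $\mathrm{rank}(M_x) = n-1$, i.e. the non-vanishing of at least one $(n-1)\times(n-1)$ minor; this carves out an open set $U_0 \subseteq R_1$. On $U_0$, Cramer's rule produces an explicit polynomial formula for a spanning vector of $\ker(\mu_x)$, namely the vector of signed maximal minors of $M_x$, so the assignment $\phi : U_0 \to \mathbb{P}(R_1)$, $x \mapsto [\ker \mu_x]$, is a morphism of algebraic varieties. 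The subset $V \subset \mathbb{P}(R_1)$ consisting of classes $[y]$ with $\mu_y$ surjective is again open by the same rank argument applied to $M_y$, so $U = U_0 \cap \phi^{-1}(V)$ is open in $R_1$.

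Finally, the hypothesis supplies a pair $(x_0,y_0)$ of exact zero divisors in $R$; by the degree reduction mentioned in the first step this pair can be taken in $R_1 \times R_1$, so that $x_0 \in U$. Since $U$ is then a nonempty Zariski open subset of the irreducible variety $R_1 \cong k^n$, it is dense, yielding the desired open set of linear forms that are part of a pair of exact zero divisors. The main hurdle is the first step: translating the two ideal-equalities $\mathrm{Ann}(x) = (y)$ and $\mathrm{Ann}(y) = (x)$ into a clean, symmetric pair of rank conditions on the single bilinear map $R_1 \otimes R_1 \to R_2$, so that algebro-geometric openness arguments apply. Once that translation is in place, the remaining openness and density argument is a standard application of minors and Cramer's rule.
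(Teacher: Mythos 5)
This theorem is quoted from \cite{br}; the paper gives no proof of it, so there is nothing internal to compare your argument against. On its own merits, your openness argument is the right idea and is carried out correctly: the translation of ``$(x,y)$ is an exact zero divisor pair of linear forms'' into ``$\mu_x$ is surjective and its one-dimensional kernel is spanned by a $y$ with $\mu_y$ surjective'' is valid (using $\mathrm{Ann}_R(x)=\ker\mu_x\oplus R_2$ and $(y)=ky\oplus yR_1$, plus the Artinian remark), and the minors/Cramer argument does exhibit the locus of such $x$ as a Zariski open subset of $R_1$.

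Two points need repair. First, your appeal to Theorem \ref{Yo} is both inapplicable and unnecessary: that theorem assumes $R$ is not Gorenstein, which you have not verified. In fact you do not need it --- once you know the hypothesized pair $(x_0,y_0)$ consists of two linear forms, the equality $\mathrm{Ann}_R(x_0)=(y_0)$ reads $\ker\mu_{x_0}\oplus R_2=ky_0\oplus y_0R_1$, and comparing graded pieces (together with $\mathrm{Ann}_R(y_0)=(x_0)$, which gives $\ker\mu_{y_0}=kx_0$) yields $\dim_k R_2=\dim_k R_1-1$ directly. Second, and more seriously, the reduction of the hypothesized pair to one lying in $R_1\times R_1$ is exactly the step you wave at (``a short degree count \dots rules out partners of degree other than one in all but trivial cases'') without carrying it out; note that the hypothesis only supplies \emph{some} pair of exact zero divisors, of unspecified degrees, and the degree count you would use to rule out a partner in $R_2$ cannot presuppose $\dim_k R_2=n-1$, since that equality was itself to be derived from the pair being linear--linear. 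The count can be done honestly: if either member of a homogeneous pair lies in $R_2$, then $\mathrm{Ann}$ of the other contains all of $R_2$ while the corresponding principal ideal is at most $(1+\dim_k R_2)$-dimensional, which forces $\dim_k R_1=1$, a case one checks by hand ($R=k[x]/(x^d)$, $d\le 3$, where every nonzero linear form is an exact zero divisor with partner $x^{d-1}$). You should also either invoke the paper's standing convention that exact zero divisors are homogeneous or address non-homogeneous pairs, since the statement as written does not restrict the degrees of the given pair. With those two repairs the argument is complete.
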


\section{Statement of the problem and preliminaries}

Our goal is to find a converse to the result in Theorem (\ref{br}). Specifically, we ask the following:
\begin{question}
If $R$ is a standard graded $k$-algebra with the property that  general linear forms  are exact zero divisors, what can be said about the Hilbert function of $R$?
\end{question}
Note that we are no longer assuming that $R_{\ge 3}=0$. We conjecture that a condition similar to the one in Theorem (\ref{Yo})  must hold. Specifically, we propose the following
\begin{conjecture}\label{conj}
Assume that $R$ is a standard graded algebra, and $d>0$ is such that for general linear forms $\ell $, there exists $Q$ of degree $d-1$ such that $(\ell, Q)$ is a pair of exact zero divisors. Then 
$$
\mathrm{dim}(R_d)=\mathrm{dim}(R_{d-1})-1
$$
\end{conjecture}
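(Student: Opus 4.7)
The plan is to establish the desired equality by showing that, for generic $\ell$, multiplication by $\ell$ from $R_{d-1}$ to $R_d$ is surjective; coupled with the fact that its kernel is forced to be one-dimensional (spanned by $Q_\ell$), this yields $\dim R_d = \dim R_{d-1} - 1$. I would begin with the easy direction: from $\mathrm{Ann}_R(\ell) = (Q_\ell)$ with $\deg Q_\ell = d-1$, one has $(Q_\ell)_i = 0$ for $i < d-1$ and $(Q_\ell)_{d-1} = kQ_\ell$, so multiplication by $\ell$ from $R_i$ to $R_{i+1}$ is injective for $i < d-1$, and in degree $d-1$ its kernel is exactly $kQ_\ell$. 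This gives $\dim \ell R_{d-1} = \dim R_{d-1} - 1$, hence $\dim R_d \geq \dim R_{d-1} - 1$.

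I would next reformulate the missing inequality. Multiplication by $Q_\ell$ is an isomorphism of graded $R$-modules $R/(\ell) \xrightarrow{\sim} (Q_\ell)$ shifting degree by $d-1$, so in degree $d$ it gives $(R/(\ell))_d \cong (Q_\ell)_{2d-1} = Q_\ell R_d$. Therefore $\dim R_d = \dim R_{d-1} - 1$ is equivalent to $Q_\ell R_d = 0$, i.e., to $R_d \subseteq \mathrm{Ann}(Q_\ell) = (\ell)$, i.e., to $\ell R_{d-1} = R_d$. Writing $s_i := \dim (R/(\ell))_i$ for generic $\ell$, one obtains the Hilbert-series identity $H_R(t) = H_{R/(\ell)}(t)(1 + t + \cdots + t^{d-1})$, and the conjecture reduces to the single statement $s_d = 0$.

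The hard step is this surjectivity. The Hilbert-series identity alone is consistent with any $s_d \geq 0$ compatible with Macaulay's bounds on $S := R/(\ell)$, so the ring structure of $S$ (equivalently, the family of pairs $(\ell, Q_\ell)$) must enter essentially. A natural attack is to analyze the morphism $\psi : \mathcal{U} \to \mathbb{P}(R_{d-1})$, $[\ell] \mapsto [Q_\ell]$: it cannot be constant, since for fixed $Q$ the locus $\{\ell : \ell Q = 0\}$ is a proper subspace of $R_1$, so $\psi$ is generically one-to-one, its image has dimension $n-1$, and the subspaces $Q_\ell R_d \subseteq R_{2d-1}$ of dimension $v := s_d$ form a genuinely moving family whenever $v \geq 1$. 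In that case $\bigcap_{\ell \in \mathcal{U}} (Q_\ell)_d$ coincides with $\mathrm{soc}(R)_d$, which is nonzero because the generic $\ell$ being a zero divisor forces $\mathfrak{m} \in \mathrm{Ass}(R)$; I would try to extract an incompatibility between the motion of the $(n-1)$-dimensional subspaces $Q_\ell R_1 \subseteq R_d$ and the fixed socle. An alternative route is to truncate $R$ by a carefully chosen homogeneous ideal contained in $R_{\geq d+1}$, producing an Artinian quotient that retains $(\ell, Q_\ell)$ as exact zero divisors, and then invoke a generalization of Theorem~\ref{Yo} or the Hilbert-function bound of \cite{KSV}.

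Closing this step is the main obstacle, as evidenced by the authors' successful treatments only for monomial ideals and for a quasi-monomial case generated in degree $2$. A fully general argument will very likely require structural input beyond the Hilbert-function recursion above---plausibly a Gr\"obner or flat degeneration that preserves the exact zero divisor structure, or a substantive use of the periodic free $R$-resolution of $R/(\ell)$ guaranteed by the pair $(\ell, Q_\ell)$.
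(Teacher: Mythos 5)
There is a genuine gap, and you have correctly identified where it is: the statement you are proving is stated in the paper only as a conjecture, proved there solely for monomial ideals (Theorem~\ref{mt}) and for degree-2 ideals with a single non-monomial generator, and your proposal likewise does not close the decisive step. Your reductions are all correct and in fact reproduce the skeleton the paper uses in its special cases: the inequality $\dim R_d \ge \dim R_{d-1}-1$ is Observation~\ref{obs} in contrapositive form, and the equivalence of the conjecture with $Q_\ell R_d=0$ (equivalently $R_d\subseteq(\ell)$, equivalently surjectivity of multiplication by $\ell$) is exactly the pivot of the proof of Theorem~\ref{mt}, where surjectivity is then contradicted by the one-dimensional kernel. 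The Hilbert-series identity $H_R(t)=H_{R/(\ell)}(t)(1+t+\cdots+t^{d-1})$ is also correct. But everything after ``the hard step is this surjectivity'' is a list of unexecuted attacks rather than an argument: the claim that $\mathrm{soc}(R)_d\ne 0$ is unjustified (genericity of zero-divisor linear forms forces $\mathfrak m\in\mathrm{Ass}(R)$ and hence a nonzero socle, but not in degree $d$ specifically), and no contradiction is actually extracted from the ``moving family'' of subspaces $Q_\ell R_1$ or $Q_\ell R_d$.

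For comparison, the structural input that makes the paper's monomial case work is Lemma~\ref{lem1}: if $LQ=0$ and $\mu$ is a monomial of $Q$ with nonzero coefficient, then every degree-$(2d-1)$ monomial multiple of $\mu$ lies in $I$; this is proved by passing to the monomial complete intersection $P/(x_1^{a_1+1},\ldots,x_n^{a_n+1})$ of socle degree $2d-1$ and invoking Stanley's Weak Lefschetz theorem to force injectivity of multiplication by $L$ in the middle degree. That lemma is what delivers $R_dQ=0$. Your proposal contains no substitute for this input in the general (non-monomial) setting, which is precisely why the statement remains a conjecture; a complete proof would require new structural leverage of the kind you gesture at (a degeneration preserving the exact-zero-divisor structure, or exploitation of the periodic resolution of $R/(\ell)$), but none of these is developed here.
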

We note that if $I$ is generated by homogeneous polynomials of degree $d$, and $(\ell, Q)$ is a pair of exact zero divisors with $\ell $ a linear form, then the degree of $Q$ must be $d-1$ by Proposition 1.9  in \cite{KSV2}.

We will prove that the conjecture is true in the case when $I$ is generated by monomials of arbitrary degrees, and also when all but one of the generators of $I$ are monomials, and all the generators have degree 2. 

The following remark shows that it will be enought to consider the case $\mathrm{dim}(R_d) \ge \mathrm{dim}(R_{d-1})-1$.

\begin{obs}\label{obs} If $\mathrm{dim}(R_d) \le \mathrm{dim}(R_{d-1})-2$, then there are no pairs of exact zero divisor pairs $(\ell, Q)$ with $\ell \in R_1$ and $Q\in R_{d-1}$.
\end{obs}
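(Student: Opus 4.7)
The plan is to prove the contrapositive: assume that a pair $(\ell,Q)$ of exact zero divisors with $\ell\in R_1$ and $Q\in R_{d-1}$ exists, and deduce that $\dim_k R_d \ge \dim_k R_{d-1}-1$.

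The key tool is the graded multiplication map $\mu_\ell : R_{d-1}\to R_d$ given by $f\mapsto \ell f$. First I would identify its kernel. By the definition of an exact zero divisor pair, $\mathrm{Ann}_R(\ell)=(Q)$, so $\ker(\mu_\ell)=(Q)\cap R_{d-1}$. Since $Q$ is homogeneous of degree $d-1$, an element $rQ\in (Q)$ lies in $R_{d-1}$ only if $r\in R_0=k$; hence $(Q)\cap R_{d-1}=kQ$. Because $Q\neq 0$ (an exact zero divisor is by definition nonzero), this kernel is one-dimensional.

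Applying rank--nullity to $\mu_\ell$ then yields $\dim_k\mathrm{Im}(\mu_\ell)=\dim_k R_{d-1}-1$. Since $\mathrm{Im}(\mu_\ell)\subseteq R_d$, this forces $\dim_k R_d\ge \dim_k R_{d-1}-1$, which is exactly the contrapositive of the observation.

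I do not anticipate any real obstacle here; the only subtlety is being careful that $(Q)\cap R_{d-1}$ is genuinely one-dimensional (not zero, and not larger), which follows from $Q\neq 0$ and the degree constraint. Everything else is a single application of the rank--nullity theorem to a graded component of the multiplication-by-$\ell$ map.
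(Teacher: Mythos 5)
Your proof is correct and is essentially the paper's argument in contrapositive form: both reduce to the fact that $\ker(\ell\cdot\colon R_{d-1}\to R_d)=\mathrm{Ann}_R(\ell)\cap R_{d-1}=(Q)\cap R_{d-1}=kQ$ is at most one-dimensional, combined with rank--nullity. The paper states it directly (the hypothesis forces a kernel of dimension at least two, so the annihilator cannot be principal in degree $d-1$), but the content is identical.
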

\begin{proof}
For every linear form $\ell$,  the map given by multiplication by $\ell : R_{d-1} \to R_d$ has a kernel of dimension at least two, and therefore $\mathrm{Ann}_R(\ell )$ cannot be generated by a single element of degree $d-1$. 
\end{proof}

We recall the definition of the Weak Lefschetz Property (WLP), which will be used in the proof for the monomial case:
\begin{definition}
We say that a stadard graded algebra $R$ has the Weak Lefschetz Property (WLP) if for general linear forms $\ell $, and for every degree $i>0$, the multiplication by $\ell $ map $:R_{i-1} \to R_i$ has maximal rank (meaning that it is injective when $\mathrm{dim}(R_{i-1} ) \le  \mathrm{dim}(R_i)$, and surjective when $\mathrm{dim}(R_{i-1} ) \ge  \mathrm{dim}(R_i)$).
\end{definition}

It is known that WLP holds for many classes of rings (see \cite{MN} for a survey). Most notably, it holds when $R=P/I$ with $I=(x_1^{a_1}, \ldots, x_n^{a_n})$ for any $a_1, \ldots, a_n>0$ by \cite{St2}.

We note the following connection between the problem we study and WLP:
\begin{obs}
If $\mathrm{dim}_k(R_d)\ge \mathrm{dim}_k(R_{d-1})$ and general linear forms $\ell $ are part of a pair of exact zero divisors $(\ell, Q)$ with $\mathrm{deg}(Q)=d-1$, then the map given by multiplication by $\ell :R_{d-1} \to R_d$ is not injective, and therefore $R$ does not have the Weak Lefschetz Property.
\end{obs}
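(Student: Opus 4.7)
The plan is to produce, from each linear form $\ell$ that is part of a pair of exact zero divisors, an explicit nonzero kernel element of multiplication $\cdot\ell: R_{d-1} \to R_d$, and then invoke irreducibility of $k^n$ to derive failure of WLP.

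First I would extract the nonzero kernel element directly from the exact zero divisor relation. If $(\ell, Q)$ is a pair with $\mathrm{Ann}_R(\ell)=(Q)$ and $\mathrm{deg}(Q)=d-1$, then $\ell Q=0$ in $R$. Moreover $Q\ne 0$ in $R$: otherwise $\mathrm{Ann}_R(\ell)=(0)$ would force $\mathrm{Ann}_R(Q)=\mathrm{Ann}_R(0)=R$, but the matching condition $\mathrm{Ann}_R(Q)=(\ell)$ together with $(\ell)\subseteq \mathfrak{m}$ rules this out. Hence $Q$ is a nonzero element of $R_{d-1}$ killed by $\ell$, so the map $\cdot\ell: R_{d-1}\to R_d$ has nontrivial kernel. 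Under the hypothesis $\dim_k R_{d-1}\le \dim_k R_d$, the maximal rank condition at this step is injectivity, which has just been contradicted for this particular $\ell$.

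Next I would upgrade this from a single $\ell$ to a statement about general $\ell$ and conclude failure of WLP. By hypothesis there is a nonempty Zariski open $\mathcal{U}\subseteq k^n$ of coefficient vectors for which $\ell=\sum a_i x_i$ admits an exact zero divisor partner in degree $d-1$; by the previous paragraph, for every $\ell$ in $\mathcal{U}$ the map $\cdot\ell: R_{d-1}\to R_d$ is not injective. On the other hand, if $R$ satisfied WLP, the locus of coefficient vectors for which the multiplication maps have maximal rank in every degree would contain a nonempty Zariski open $\mathcal{V}\subseteq k^n$. Since $k^n$ is irreducible, $\mathcal{U}\cap\mathcal{V}$ would be nonempty, producing a single $\ell$ for which $\cdot\ell: R_{d-1}\to R_d$ is simultaneously injective and non-injective, a contradiction.

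I do not foresee a real obstacle here; the whole statement is essentially a translation between the definitions. The only small care point is verifying $Q\ne 0$, which follows from the symmetric second condition in the definition of an exact zero divisor pair together with the properness of $(\ell)$ in the graded Artinian setting.
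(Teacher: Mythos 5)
Your proof is correct, and it fills in exactly the reasoning the paper leaves implicit (the paper states this Observation without proof, treating it as immediate from the definitions): $Q$ is a nonzero element of $R_{d-1}$ annihilated by $\ell$, so $\cdot\ell$ fails injectivity where maximal rank would require it, and the two nonempty Zariski open sets must meet since $k^n$ is irreducible. Your care in checking $Q\ne 0$ via $\mathrm{Ann}_R(Q)=(\ell)\subsetneq R$ is a nice touch, though not a point the paper would have dwelt on.
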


\section{The monomial case}

Assume, as in the statement of the Conjecture, that there is a nonempty Zariski open set $\mathcal U$ of linear forms such that each $\ell $ in $\mathcal U$  is part of a pair of exact zero divisors. Since a nonempty Zariski open set is dense, this implies that there exists $\ell =\sum_{i=1}^n a_ix_i\in \mathcal U$ with all $a_i\ne 0$. The change of variables $x_i \to a_i x_i$ is a ring automorphism of the polynomial ring $P$. If $I$ is generated by monomials, it sends elements of $I$ to elements of  $I$ and therefore it gives rise to a ring automorphism of $R=P/I$. This shows that $\ell =\sum_{i=1}^na_ix_i$ with all $a_i\ne 0$ is part of a pair of exact zero divisors if and only if $L:=\sum _{i=1}^n x_i$ is.

\begin{lemma}\label{lem1}
Let $R=P/I$ with $P=k[x_1, \ldots, x_n]$, and $I$ a monomial ideal. Let $L=\sum_{i=1}^n x_i\in R$.
Let $Q=\sum_{\mu } a_{\mu} \mu $ be an element of $R_{d-1}$, where the summation is over all the monomials $\mu $ of degree $d-1$, and $a_{\mu } \in k$ is the coefficient of $\mu $.

Assume that $LQ=0$ (as an element of $R$). If $\mu $ is a monomial of degree $d-1$ with $a_{\mu } \ne 0$ and $M$ is a monomial of degree $2d-1$ such that $\mu $ divides $M$, then $M\in I$.
\end{lemma}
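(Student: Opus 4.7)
My plan is to translate the hypothesis $LQ\in I$ (as polynomials in $P$) into a system of linear relations on the coefficients $a_\mu$, then recognize these relations as the kernel of a multiplication map on an Artinian monomial complete intersection, and finally invoke Stanley's theorem to conclude. Expanding $LQ=\sum_i\sum_\mu a_\mu x_i\mu$ and using that $I$ is a monomial ideal, the coefficient in $LQ$ of every standard monomial $\tau$ of degree $d$ must vanish. Writing $Q$ in its unique standard representation (so that $a_\nu=0$ whenever $\nu\in I$), this gives, for every standard $\tau$ of degree $d$,
\[
\sum_{j:\ x_j\mid \tau} a_{\tau/x_j}=0.
\]

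Next, I argue by contradiction: suppose $M=x_1^{m_1}\cdots x_n^{m_n}$ is a monomial of degree $2d-1$ with $M\notin I$, $\mu\mid M$, and $a_\mu\neq 0$. Since $I$ is monomial and $M\notin I$, no divisor of $M$ lies in $I$. Let $V$ and $U$ denote, respectively, the sets of monomial divisors of $M$ of degrees $d-1$ and $d$. For each $\tau\in U$ the relation above specializes to $\sum_{\mu'\in V,\ \mu'\mid \tau} a_{\mu'}=0$, since the $\tau/x_j$ appearing range over exactly those $\mu'\in V$ dividing $\tau$.

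The decisive step is to identify this linear system with a Lefschetz-type condition. Consider the Artinian monomial complete intersection $R':=k[x_1,\ldots,x_n]/(x_1^{m_1+1},\ldots,x_n^{m_n+1})$. Its standard monomial bases in degrees $d-1$ and $d$ are precisely $V$ and $U$, and for $\mu'\in V$ one computes $L\mu'=\sum_{i:\ \mu'_i<m_i}x_i\mu'=\sum_{\tau\in U,\ \mu'\mid \tau}\tau$ in $R'$. Hence our relations assert exactly that $(a_{\mu'})_{\mu'\in V}$ lies in the kernel of multiplication by $L\colon R'_{d-1}\to R'_d$. Because $R'$ is Artinian Gorenstein of socle degree $\sum m_i=2d-1$, Hilbert-function symmetry gives $\dim_k R'_{d-1}=\dim_k R'_d$. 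By Stanley's theorem \cite{St2}, $R'$ has the weak Lefschetz property with $L=x_1+\cdots+x_n$ as a Lefschetz element in characteristic zero, so this multiplication map has maximal rank and is therefore bijective. Consequently $(a_{\mu'})_{\mu'\in V}=0$, contradicting $a_\mu\neq 0$.

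The subtle point is that we need the specific linear form $L=x_1+\cdots+x_n$ (rather than a merely generic linear form) to be Lefschetz for $R'$; in characteristic zero this is precisely what Stanley's argument delivers, via the identification of $R'$ with a quotient of the cohomology ring of a product of projective spaces. I anticipate this to be the only nonroutine input of the proof; the combinatorial identifications of $V$, $U$, and the incidence matrix with the multiplication-by-$L$ map are then straightforward bookkeeping.
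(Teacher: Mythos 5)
Your proof is correct and takes essentially the same approach as the paper: both pass to the monomial complete intersection $k[x_1,\ldots,x_n]/(x_1^{m_1+1},\ldots,x_n^{m_n+1})$ determined by $M$ (the paper via the containment $I\subseteq J$ with $J$ the ideal of non-divisors of $M$, you via explicit coefficient bookkeeping) and then invoke Stanley's theorem on the weak Lefschetz property together with the socle degree $2d-1$ to force the relevant coefficients of $Q$ to vanish. Your use of Gorenstein symmetry to get $\dim R'_{d-1}=\dim R'_d$ correctly supplies the comparison of dimensions that the paper leaves to an (unfilled) citation.
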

\begin{proof}
Assume that $M\notin I$. We will prove that $a_{\mu }=0$ for every $\mu $ of degree $d-1$ which divides $M$. Define $J$ to be the ideal generated by all monomials in $P$ that do not divide $M$. If $M=x_1^{a_1}x_2^{a_2} \cdots x_n^{a_n}$, then $J=(x_1^{a_1+1}, \ldots, x_n^{a_n+1})$. 
The assumption that $M\notin I$ means that $M$ is not divisible by any of the monomial generators of $I$, and therefore $I\subseteq J$. Thus, $LQ=0$ as an element of $R$ (i.e. $LQ\in I$) implies $LQ\in J$.

Consider $\overline{P}:=P/J$, and let $\overline{L}, \overline{Q}$ be the images of $L$ and $Q$ respectively in $\overline{P}$.
We know that $\overline{P}$ has WLP by \cite{St2}.

The socle degree of $\overline{P}$ is $a_1+\cdots + a_n =\mathrm{deg}(M)=2d-1$. By \ \ \ \ \  ,
this implies that 
$$
\mathrm{dim}_k(\overline{P}_{d-1}) \le \mathrm{dim}_k(\overline{P}_d),
$$
and the WLP tells us that the map given by multiplication by $\overline{L} : \overline{P}_{d-1} \to \overline{P}_d$ is injective. Since $\overline{L}\overline{Q}=0$, this implies $\overline{Q}=0$. This means that for every $\mu \notin J$, we must have $a_{\mu } =0$.
\end{proof}
We are now ready to prove Conjecture (\ref{conj}) in the monomial case:
\begin{theorem}\label{mt}
Let $I\subset P=k[x_1,\ldots, x_n]$ be a monomial ideal, and $R=P/I$. If there is a nonempty Zariski open set $\mathcal U$ such that  for all $\ell \in \mathcal U$  there is a $Q\in R_{d-1}$ with $(\ell, Q)$= a pair of exact zero divisors, then $\mathrm{dim}_k(R_d)=\mathrm{dim}_k(R_{d-1})-1$.
\end{theorem}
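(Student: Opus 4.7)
The plan is to reduce to showing that the multiplication by $L := x_1 + \cdots + x_n$ map $R_{d-1} \to R_d$ is surjective, and then to leverage Lemma~\ref{lem1} to establish this surjectivity. By the change of variables argument preceding Lemma~\ref{lem1}, a general linear form $\ell$ can be replaced by $L$, so we may pick $Q \in R_{d-1}$ such that $(L, Q)$ is an exact zero divisor pair. By Observation~\ref{obs} the hypothesis already implies $\mathrm{dim}_k(R_d) \geq \mathrm{dim}_k(R_{d-1}) - 1$, so it suffices to prove the reverse inequality.

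Next I would analyze the multiplication by $L$ map $R_{d-1} \to R_d$. The condition $\mathrm{Ann}_R(L) = (Q)$ forces the kernel of this map to be the one-dimensional span $k \cdot Q$, since $(Q)_{d-1} = Q \cdot R_0 = k \cdot Q$. Thus the desired inequality $\mathrm{dim}_k(R_d) \leq \mathrm{dim}_k(R_{d-1}) - 1$ is equivalent to surjectivity of multiplication by $L$, which in turn is equivalent to the inclusion $R_d \subseteq (L)_d$, and (using $\mathrm{Ann}_R(Q) = (L)$) to the vanishing $Q \cdot R_d = 0$.

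To prove $Q \cdot R_d = 0$, write $Q = \sum_\mu a_\mu \mu$ and consider any monomial $\nu \in P$ of degree $d$. For each $\mu$ with $a_\mu \neq 0$, the product $\mu \nu$ is a monomial of degree $2d-1$ divisible by $\mu$, so Lemma~\ref{lem1} yields $\mu \nu \in I$ and hence $\mu \nu = 0$ in $R$. Summing over $\mu$, we get $Q \cdot \nu = 0$ in $R$ for every degree-$d$ monomial $\nu$, and by linearity $Q \cdot R_d = 0$. This completes the argument.

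The main technical content is already packaged into Lemma~\ref{lem1}, whose proof invokes Stanley's WLP result for the complete intersection $P/(x_1^{a_1+1}, \ldots, x_n^{a_n+1})$; once that lemma is in hand, the deduction above is essentially a short chain of equivalences about the multiplication by $L$ map, and the only mildly delicate point is remembering that the claim $\mu \nu \in I$ must be applied to \emph{every} degree-$d$ monomial $\nu$ (including those already in $I$, where $\mu \nu = 0$ holds trivially), so that passage to arbitrary elements of $R_d$ by linearity is justified.
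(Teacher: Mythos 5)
Your proposal is correct and follows essentially the same route as the paper: reduce to $L=x_1+\cdots+x_n$ via the change-of-variables automorphism, apply Lemma~\ref{lem1} to conclude $Q\cdot R_d=0$, deduce $R_d\subseteq\mathrm{Ann}_R(Q)=(L)$ so that multiplication by $L:R_{d-1}\to R_d$ is surjective, and combine this with the fact that $Q$ lies in the kernel. The only cosmetic difference is that you argue directly while the paper phrases the final dimension count as a contradiction with $\mathrm{dim}_k(R_d)\ge\mathrm{dim}_k(R_{d-1})$.
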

\begin{proof}
The assumption implies that $L=\sum_{i=1}^n x_i$ is part of a pair of exact zero divisors $(L, Q)$, with $Q\in R_{d-1}$.
Assume by way of contradiction that $\mathrm{dim}_k(R_d)\ne \mathrm{dim}_k(R_{d-1})-1$. Observation (\ref{obs}) then shows that we must have $\mathrm{dim}_k(R_d)\ge \mathrm{dim}_k(R_{d-1})$.

Since $LQ=0$, Lemma~\ref{lem1} shows that for all monomials $\nu \in R_d$, we have $\nu Q=0$ (for each term of $Q=\sum_{\mu } a_{\mu } \mu $, we have either $a_{\mu}=0$ or $\mu \nu =0$ as an element of $R$). Therefore, $R_d Q=0$ and, as such, $R_d\subseteq \mathrm{Ann}_R(Q)$. The assumption that $(L, Q)$ is a pair of exact zero divisors now implies that $R_d\subseteq (L)$, which means that the multiplication by $L$ map $:R_{d-1} \to R_d$ is surjective. However, we also have $Q$ in the kernel of that map, so the dimension of the image is less than or equal to $\mathrm{dim}(R_{d-1}) -1$, and therefore strictly less than  $\mathrm{dim}(R_d)$, which contradicts surjectivity.
\end{proof}

We end this section with an example to illustrate that general linear exact zero divisors can occur in rings other than the ones with $R_{\ge 3} = 0$ (the case when $R_{\ge 3}=0$ was already known  by the results in \cite{br}).

\begin{example}
Let $\displaystyle R=\frac{k[x_1, \ldots, x_n]}{(x_1^d)+ (x_2, \ldots, x_n)^d}$, $\displaystyle L=\sum_{i=1}^n x_i$, and 
$\displaystyle Q=\sum_{i=0}^{d-1} (-1)^i \ell_0 ^i x_1^{d-1-i}$, where $\displaystyle \ell_0 =\sum_{i=2}^n x_i$. Then $(L, Q)$ is a pair of exact zero divisors. Consequently, for generic linear form $\ell $, $\ell $ is part of a pair of exact zero divisors. 

\end{example}
\begin{proof}
We have $LQ=x_1^d-\ell _0^d$, which is zero as an element of $R$.

Let $Q'=\sum_{i=0}^D q_i x_1^{D-i}$ be an arbitrary element of $k[x_1, \ldots, x_n]$, written as a polynomial in $x_1$ with coefficients $q_i \in k[x_2, \ldots, x_n]$. We want to show that if $LQ'=0$ as an element of $R$, then $Q'$ must be a multiple of $Q$ when viewed as an element of $R$.

Since $L=x_1+\ell_0$, we have $\displaystyle LQ'=\sum_{i=0}^{D}  (q_i+\ell _0 q_{i-1} ) x_1^{D+1-i}$, where $q_{-1}=0$. The requirement that $LQ'=0$ as an element of $R$ means that all the coefficients of $x_1^i$ for $i< d$ must belong to $(x_2, \ldots, x_n)^d$. Therefore, we have 
$q_D+\ell _0 q_{D-1} \equiv 0, q_{D-1}+\ell _0 q_{D_2} \equiv0, \ldots, q_{D-d+2} + \ell _0 q_{D-d+1} \equiv 0$ (mod $(x_2, \ldots, x_n)^d$). Therefore, $q_{D-d+i} \equiv (-1)^{i-1} \ell _0^{i-1} q_{D-d+1}$ (mod $(x_2, \ldots, x_n)^d$) for all $1\le i \le d$.
As an element of $R$, $x_1^{d+j}=0$ for all $j\ge 0$, and 
$$
Q'=\sum_{i=D-d+1}^{D} q_i x_1^{D-i}=q_{D-d+1} (\sum_{i=1}^{d}(-1)^{i-1}\ell_0^{i-1} x^{d-i})
$$
and therefore it is a multiple of $Q$ as desired.
\end{proof}
\section{The case of degree 2 and one non-monomial generator}

We aim to extend the result of Theorem (\ref{mt}) beyond the case of monomial ideals. More precisely, we consider an ideal $I$ in which all but one of the generators are monomials.
The following set up will be in effect in this section:
\begin{setup}\label{su}
Let $J$ be an ideal generated by monomials of degree 2, and let  $P=k[x_1, \ldots, x_n]$, and let $f_1\ne  f_2$ be monomials of degree 2. Let $I=J+(f_1+f_2)$. 
\end{setup}
We wish to prove that Conjecture (\ref{conj})  is true under these assumptions. This will follow as a corollary of the following result, which is  the analogue of  Lemma~\ref{lem1}:
\begin{lemma}\label{th1}
Let $R$ be as in Set-up~\ref{su}.
For $\displaystyle \ell =\sum_{i=1}^na_ix_i$ a general linear form, if  $Q\in R_1$ is such that $(\ell, Q)$ is a pair of exact zero divisors in $R$, then $QR_2=0$.
\end{lemma}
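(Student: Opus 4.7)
My plan is to mimic the proof of Lemma~\ref{lem1}, handling the extra non-monomial generator $f_1+f_2$ through case analysis on monomials of degree~$3$. Write $\ell Q = g + \lambda(f_1+f_2)$ in $P$ for some $g \in J$ and $\lambda \in k$. If $\lambda = 0$, then $\ell Q \in J$; applying Lemma~\ref{lem1} to the monomial quotient $P/J$ (the rescaling at the start of Section~3 preserves $J$ and reduces a generic $\ell$ to $L = \sum x_i$) shows that for each $i$ with $b_i \neq 0$ and each monomial $M$ of degree~$3$ with $x_i \mid M$ we have $M \in J$; summing, $Q R_2 \subseteq J \subseteq I$, so $Q R_2 = 0$ in $R$.

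For the main case $\lambda \neq 0$, I fix a monomial $N$ of degree~$2$ and aim to produce $h \in P_1$ with $Q N - h(f_1+f_2) \in J$, which will give $Q N \in I$. Writing $Q = \sum_i b_i x_i$ and $h = \sum_i e_i x_i$ and matching the coefficient of each monomial $M \notin J$ of degree~$3$, the existence of such an $h$ becomes the linear system
\[
b_{M/N}\,[N \mid M] \;=\; e_{M/f_1}\,[f_1 \mid M] + e_{M/f_2}\,[f_2 \mid M].
\]
For each such $M$, consider the auxiliary ring $\bar P = P/J_M$ with $J_M = (x_1^{a_1+1}, \ldots, x_n^{a_n+1})$, where $M = \prod x_i^{a_i}$. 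By \cite{St2}, $\bar P$ has WLP, and since its socle degree is $3$ the Hilbert function is symmetric with $\dim \bar P_1 = \dim \bar P_2$, so multiplication by $\bar \ell$ is a bijection $\bar P_1 \to \bar P_2$ for generic $\ell$. Since $J \subseteq J_M$, the relation $\ell Q - \lambda(f_1+f_2) \in J$ descends to $\bar \ell \bar Q = \lambda(\bar f_1 + \bar f_2)$ in $\bar P$, which uniquely determines $\bar Q$. When $f_1, f_2 \nmid M$ one has $\bar f_1 = \bar f_2 = 0$, hence $\bar Q = 0$ and $b_i = 0$ for every $x_i \mid M$; then the coefficient of $M$ in $Q N$ vanishes automatically. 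When $f_1 \mid M$ or $f_2 \mid M$, the unique $\bar Q$ pins down the relevant $b_i$'s as explicit expressions in $\lambda$ and the $a_i$'s, and the displayed equation then reads off the corresponding $e_i$.

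The main obstacle I anticipate is verifying the global consistency of the $e_i$'s. Each $e_i$ is typically constrained by two equations — one from $M = x_i f_1$ and one from $M = x_i f_2$ (when these lie outside $J$) — with further coupling in the delicate sub-case where a single monomial $M$ equals both $x_j f_1$ and $x_k f_2$, which forces the supports of $f_1$ and $f_2$ to overlap. Since all the local determinations of $\bar Q$ stem from the single global $Q$ satisfying $\ell Q \in I$, I expect the resulting overdetermined system on the $e_i$ to be consistent by construction; the bulk of the technical work will be in organizing this case analysis, especially when $f_1$ and $f_2$ share variables.
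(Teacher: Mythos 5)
There is a genuine gap: the entire content of the lemma is concentrated in the step you defer with ``I expect the resulting overdetermined system on the $e_i$ to be consistent by construction.'' It is not consistent by construction, and proving consistency is exactly as hard as the lemma itself. Here is a concrete illustration. Take $f_1=xy$, $f_2=st$ with $x,y,s,t$ distinct, let $x_j$ be a fifth variable, and take $N=x_jx$. If $M=x_jxy\notin J$, your local computation in $\bar P=k[x_j,x,y]/(x_j^2,x^2,y^2)$ forces $b_y=\lambda/(2a_x)\ne 0$, and the equation for $M$ reads $e_{x_j}=b_{M/N}=b_y$; but if also $M'=x_jst\notin J$, then $N\nmid M'$ and the equation for $M'$ reads $e_{x_j}=0$. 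So the system for $h$ is inconsistent unless this configuration is impossible --- and ruling it out requires \emph{global} information about $J$ and $Q$ that the single hypothesis $\ell Q\in I$ plus WLP at one monomial at a time does not obviously supply. (One can start extracting such global constraints by comparing the local determinations of the same coefficient $b_u$ at different monomials $M$, e.g.\ at $x_jxy$ versus $yst$, but that is precisely the case analysis you have not carried out.) Note also that your argument only uses $\ell Q\in I$, whereas the paper uses the full exact-zero-divisor hypothesis in an essential way.

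The paper's route supplies the missing global input through a structural result you have no analogue of: Lemma~\ref{conca} shows, via Conca's theorem on initial ideals and the fact that $\mathrm{Ann}_R(\ell)=(Q)$, that $Q$ splits as $Q_1+Q_2$ with $\ell Q_1\in J+(f_1)$ and $\ell Q_2\in J+(f_2)$. This reduces everything to the monomial Lemma~\ref{lem1} applied to the two monomial ideals $I_1,I_2$ separately, after which a three-case analysis on $\gcd(f_1,f_2)$ (coprime; common variable, no square; common variable, one square) shows the individual coefficients $c_u,d_u$ vanish whenever they would cause trouble. Your $\lambda=0$ case and your treatment of monomials $M$ with $f_1,f_2\nmid M$ are fine (and essentially agree with Lemma~\ref{facts}), but to complete the argument you must either prove a splitting of the type in Lemma~\ref{conca} or replace it with a systematic derivation of the cross-constraints among the local WLP determinations of the $b_u$; as written, the proposal stops exactly where the proof begins.
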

The proof of Conjecture (\ref{conj}) under the assumptions of Set-up~\ref{su} follows as a corollary of Lemma~\ref{th1}:
\begin{theorem}
Let $R=P/I$ with $I$ as in Theorem~\ref{th1}. Assume that $\mathrm{dim}(R_2)\ne n-1$. Then a general  linear form $\ell $ cannot be part of a pair of exact zero divisors.
\end{theorem}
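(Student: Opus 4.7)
The plan is to derive this theorem as a direct corollary of Lemma~\ref{th1}, following the same template as the proof of Theorem~\ref{mt} in the monomial case. Assume by contradiction that a general linear form $\ell$ is part of a pair $(\ell, Q)$ of exact zero divisors with $Q\in R_1$. Since $\dim_k(R_1)=n$ (the generators of $I$ lie in degree $\geq 2$, so $R_1$ is unchanged from $P_1$), the hypothesis $\dim_k(R_2)\ne n-1$ splits into two cases: $\dim_k(R_2)\le n-2$ or $\dim_k(R_2)\ge n$.

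First I would dispose of the case $\dim_k(R_2)\le n-2$ using Observation~\ref{obs}: in that case, multiplication by $\ell$ from $R_1$ to $R_2$ has kernel of dimension at least two for every $\ell$, so $\mathrm{Ann}_R(\ell)$ cannot be principal and generated in degree $1$, contradicting the assumption that $(\ell,Q)$ is a pair of exact zero divisors. This reduces us to $\dim_k(R_2)\ge n$.

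Now I would invoke Lemma~\ref{th1}: for a general $\ell$ that is part of a pair of exact zero divisors $(\ell,Q)$ with $Q\in R_1$, we have $QR_2=0$. This means $R_2\subseteq \mathrm{Ann}_R(Q)$. By the exact zero divisor hypothesis, $\mathrm{Ann}_R(Q)=(\ell)$, so $R_2\subseteq (\ell)$. Equivalently, the multiplication map $\cdot\ell\colon R_1\to R_2$ is surjective. On the other hand, $\ell Q=0$ with $Q\ne 0$, so $Q$ is in the kernel of this map, which forces $\dim_k(\mathrm{image})\le \dim_k(R_1)-1=n-1$. Combined with surjectivity, this gives $\dim_k(R_2)\le n-1$, contradicting the reduction $\dim_k(R_2)\ge n$.

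The only genuine content lies in Lemma~\ref{th1}; once that is available, the argument is essentially a repackaging of the final paragraph of the proof of Theorem~\ref{mt}, with the role of the monomial structure replaced by the conclusion of Lemma~\ref{th1}. The main obstacle of the section is therefore concentrated in Lemma~\ref{th1} itself, and I would not expect any additional technical difficulty in the passage from the lemma to this theorem.
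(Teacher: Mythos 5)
Your proposal is correct and follows essentially the same route as the paper: Observation~\ref{obs} eliminates $\dim_k(R_2)\le n-2$, and then Lemma~\ref{th1} gives $R_2\subseteq\mathrm{Ann}_R(Q)=(\ell)$, so multiplication by $\ell$ would be surjective onto $R_2$ while having $Q\neq 0$ in its kernel, a contradiction when $\dim_k(R_2)\ge n$. The only difference is that you make the case split explicit, which the paper leaves implicit in its appeal to Observation~\ref{obs}.
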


\begin{proof}
Assume by way of contradiction that for a general linear form $\ell$, there exists $Q$ of degree 1 such that $(\ell, Q)$ is a pair of exact zero divisors. 
It follows from 
 Observation (\ref{obs}) that  $\mathrm{dim}(R_2)\ge n=\mathrm{dim}(R_1)$.

Lemma (\ref{th1}) gives  $QR_2=0$, which implies $R_2\subseteq \mathrm{Ann}_R(Q)=(\ell)$, so the multiplication by $\ell $ map from $R_1$ to $R_2$ must be  surjective. However, $Q\ne 0$ is in the kernel of this map, and therefore the dimension of the image is less that or equal to $\mathrm{dim}(R_1)-1$, which is strictly less than the dimension of $R_2$, contradicting surjectivity.
\end{proof}

We note some preliminary facts before starting the proof of Lemma (\ref{th1}).
\begin{lemma}\label{conca}
Let $J$, $I$ and $R$  be as in Set-up~\ref{su}. Let $I_1:=J+(f_1), I_2:=J+(f_2)$. If $\ell$ is a general linear form and $Q$ is a linear form such that $(\ell, Q)$ is a pair of exact zero divisors in $R$, then there exist linear forms $Q_1$ and $Q_2$ such that $\ell Q_1\in I_1$, $\ell Q_2\in I_2$, and $Q=Q_1+Q_2$.
\end{lemma}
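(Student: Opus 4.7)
The plan is to reduce the decomposition to the problem of finding a single linear form $Q_1$ with $\ell Q_1 \equiv f_1 \pmod{J}$, and then to construct $Q_1$ using the exact zero divisor hypothesis together with the monomial structure of $J$. Since $\ell Q \in I = J + (f_1+f_2)$, write $\ell Q = j + c(f_1+f_2)$ with $j \in J_2$ and $c \in k$. In the trivial case $c = 0$, $\ell Q \in J \subseteq I_1 \cap I_2$, so we set $Q_1 = Q$ and $Q_2 = 0$. Otherwise, rescale $Q$ so that $c = 1$; we may further assume $f_1, f_2 \notin J$ and that $\bar f_1, \bar f_2$ are linearly independent in $\bar P_2 := P_2/J_2$, since otherwise $I$ is itself monomial and Theorem~\ref{mt} applies.

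Setting $Q_2 := Q - Q_1$, the requirements $\ell Q_i \in I_i$ together with $\ell Q = j + f_1 + f_2$ force (by the linear independence of $\bar f_1, \bar f_2$ in $\bar P_2$, which pins down the $f_1$- and $f_2$-coefficients) that $\ell Q_1 \equiv f_1 \pmod{J_2}$ and automatically $\ell Q_2 \equiv f_2 \pmod{J_2}$. Hence the problem reduces to showing $\bar f_1 \in \ell \bar P_1 \subseteq \bar P_2$ in $\bar P := P/J$, after which $Q_1$ is recovered as any preimage.

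To establish $\bar f_1 \in \ell \bar P_1$, the plan is to exploit both the generality of $\ell$ and the higher-degree structure of the exact zero divisor condition $\mathrm{Ann}_R(Q) = (\ell)$. Since $\ell$ is general we may assume all its coefficients are nonzero; applying the torus automorphism $x_i \mapsto a_i^{-1} x_i$, which preserves $J$ because $J$ is monomial, we normalize $\ell$ to $L := \sum_{i=1}^n x_i$ (at the cost of replacing $f_1 + f_2$ by a suitable scalar combination whose ideal is still $I_1 + I_2$ modulo $J$). In this normalized setting, the equations $LQ \equiv f_1 + f_2 \pmod{J}$ and $LQ_1 \equiv f_1 \pmod{J}$ become explicit linear systems in the coefficients of $Q$ and $Q_1$, both governed by the incidence combinatorics of the monomials of $\bar P_2$. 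The EZD hypothesis, which guarantees solvability of the $(f_1 + f_2)$-system, imposes structural constraints on $J$, $f_1$, and $f_2$ (for instance, forcing various products $x_p x_i$ with $p$ in the support of $f_1$ or $f_2$ to lie in $J$), and these constraints are precisely what we need to write down $Q_1$ as a linear combination of variables in the support of $f_1$.

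The main obstacle is this final combinatorial step: showing in every case that the structural constraints derived from the EZD hypothesis suffice to solve the $f_1$-system for $Q_1$. This requires a case analysis based on the shape of $f_1, f_2$ (pure squares $x_p^2$ versus mixed products $x_p x_q$) and on how their supports interact with the generators of $J$. In each case I would verify that a natural candidate, such as $Q_1 = x_p$ for a carefully chosen $p \in \mathrm{supp}(f_1)$, yields $L Q_1 = f_1 + j_1$ with $j_1 \in J_2$; the hardest part will be checking this uniformly and ensuring that the structural constraints implied by EZD on an open set are really strong enough to rule out all obstructions.
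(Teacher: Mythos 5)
Your reduction is sound and matches the paper's bookkeeping: once you have a nonzero linear form $Q_1$ with $\ell Q_1\in I_1$, the scalar normalization and the choice $Q_2=Q-Q_1$ go through exactly as in the paper (including the degenerate cases where $\ell Q\in J$ or one of the $\bar f_i$ vanishes mod $J$). But the entire content of the lemma is the existence of that $Q_1$, i.e.\ the statement $\dim\bigl(P/(I_1:\ell)\bigr)_1<n$, and your proposal leaves precisely this step as a plan: you say the EZD hypothesis ``imposes structural constraints'' that are ``precisely what we need,'' and then acknowledge that ``the hardest part will be checking this uniformly.'' That is a genuine gap, not a routine verification. Nothing in the combinatorics you describe explains why solvability of the linear system $\ell Q\equiv f_1+f_2 \pmod J$ should imply solvability of the system $\ell Q_1\equiv \alpha_1 f_1\pmod J$ with $Q_1 \ne 0$; these are different linear systems (the second has one more equation, namely vanishing of the $f_2$-coefficient, and one fewer, since the $f_1$-coefficient is free), and the implication is exactly what needs proof. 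Your proposed candidate $Q_1=x_p$ with $p\in\mathrm{supp}(f_1)$ is moreover too restrictive: there is no reason the solution should be a single variable or even be supported on $\mathrm{supp}(f_1)$ (the paper's later case analysis in Lemma~\ref{th1} explicitly allows $c_u\ne 0$ for $u$ outside the support of $f_1$).

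The paper closes this gap with a tool your outline never touches: Conca's theorem on initial ideals. Since $I=J+(f_1+f_2)$ with $J$ monomial, for any monomial order $(\mathrm{in}(I))_2$ equals $(I_1)_2$ or $(I_2)_2$, and Conca's result gives $\dim(P/(I+(\ell)))_2\le\dim(P/(I_1+(\ell)))_2$ for general $\ell$. Combining this with the two short exact sequences and the equality $\dim(P/I)_2=\dim(P/I_1)_2$ yields $\dim(P/(I_1:\ell))_1\le\dim(P/(I:\ell))_1<n$, which is the existence of $Q_1$. If you want to avoid citing Conca, you would have to prove a statement of comparable strength by hand; as written, your argument does not do so.
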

\begin{proof}
Choosing a monomial order, the initial ideal $\mathrm{in}(I)$ in degree 2 is equal to either $I_1$ or $I_2$ in degree 2. We may assume with no loss of generality that $(I_1)_2=(\mathrm{in}(I))_2$.
Theorem 1.1 in \cite{conca} shows that 
\begin{equation}\label{c}
\mathrm{dim}\left(\frac{P}{I+(\ell )}\right)_2 \le \mathrm{dim}\left(\frac{P}{I_1+(\ell )}\right)_2 
\end{equation}
We have short exact sequences 
$$
0 \longrightarrow \left(\frac{P}{(I:\ell )}\right)_1 \longrightarrow \left(\frac{P}{I}\right)_2 \longrightarrow \left(\frac{P}{I+(\ell)}\right)_2\longrightarrow 0
$$ 
and 
$$
0 \longrightarrow \left(\frac{P}{(I:\ell)}\right)_1 \longrightarrow \left(\frac{P}{I_1}\right)_2 \longrightarrow \left(\frac{P}{I+(\ell)}\right)_2\longrightarrow 0
$$ 
where the rightmost map is the canonical projection, and the leftmost map is multiplication  by $\ell$.
Therefore, 
$$
\mathrm{dim}\left(\frac{P}{I+(\ell )}\right)_2 =\mathrm{dim}\left(\frac{P}{I}\right)_2-\mathrm{dim}\left(\frac{P}{(I:\ell )}\right)_1
$$
and
$$
\mathrm{dim}\left(\frac{P}{I_1+(\ell )}\right)_2 =\mathrm{dim}\left(\frac{P}{I_1}\right)_2-\mathrm{dim}\left(\frac{P}{(I_1:\ell )}\right)_1
$$
Since $I$ and $I_1$ have the same number of generators of degree 2, the dimensions of $(P/I)_2$ and $(P/I_1)_2$ are the same. Therefore, equation (\ref{c}) shows that 
$$
\mathrm{dim}\left(\frac{P}{(I_1:\ell )}\right)_1 \le \mathrm{dim}\left(\frac{P}{(I:\ell )}\right)_1
$$
The existence of a $Q$ as in the assumption means that 
the right hand side of the above inequality is strictly less than $n$, and therefore the left hand side is also strictly less than $n$, meaning that there exists a nonzero $Q_1\in (I_1:\ell )$ of degree 1. 

We have $\ell Q\equiv \alpha (f_1+f_2)$ (mod $J$) and $\ell Q_1\equiv  \alpha _1 f_1$ (mod $J$), with $\alpha, \alpha_1 \in k$. If $\alpha_1=0$, we have $\ell Q_1\in J\subseteq I$, and the assumption that $(\ell, Q)$ is a pair of exact zero divisors in $R$ implies that $Q_1$ is equal to a multiple of $Q$; therefore, we may replace $Q_1$ by $Q$ and take $Q_2=0$ for the desired conclusion. If $\alpha_1 \ne 0$, we may replace $Q_1$ by $(\alpha Q_1)/\alpha_1$, so that $\ell Q_1\equiv\alpha  f_1$ (mod $J$), and take $Q_2=Q-Q_1$. We have $\ell Q_2\equiv \alpha f - \alpha f_1 = \alpha f_2 $ (mod $J$), and the desired conclusion holds.

\end{proof}

\begin{lemma}\label{facts}
Let $J$ and $I$ be as in Set-up~\ref{su}. Let $I_1=J+(f_1), I_2=J+(f_2)$. Let $\ell $ be a general linear form, and  let $Q_1, Q_2\in P_1$ such that $\ell Q_1\in I_1$ and $\ell Q_2\in I_2$. 
For every variable $u\in \{x_1, \ldots, x_n\}$, let $c_{u}$ and $d_{u}$ denote the coefficients of $u $ in $Q_1$ and $Q_2$ respectively.
Let $M$ be a monomial of degree 2.
Then:

{\rm a.} Assume $c_u\ne 0$, $M$ is not a multiple of $f_1$ and $u$ does not divide $f_1$. Then $uM\in J$.

{\rm b.} Assume  $d_u \ne 0$, $M$ is not a multiple of $f_2$ and $u$ does not divide $f_2$. Then $uM\in J$.
\end{lemma}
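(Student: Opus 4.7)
The plan is to expand $\ell Q_1$ in the monomial basis and read off what the hypothesis $\ell Q_1\in I_1$ forces on the coefficients. Since $J$ is generated by monomials of degree $2$ and $f_1$ is itself a degree-$2$ monomial, the degree-$2$ component of $I_1=J+(f_1)$ is the $k$-span of the degree-$2$ monomials lying in $J$ together with $f_1$. So $\ell Q_1\equiv \alpha_1 f_1\pmod{J}$ for some $\alpha_1\in k$, and consequently the coefficient of every degree-$2$ monomial $N$ with $N\notin J$ and $N\ne f_1$ in the expansion $\ell Q_1=\sum_{i,j} a_ic_j x_ix_j$ must vanish. Concretely, for $i\ne j$ this coefficient is $a_ic_j+a_jc_i$, and for $i=j$ it is $a_ic_i$.

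For part (a) I argue by contradiction. Suppose $uM\notin J$, and write $M=x_ax_b$ (possibly $a=b$). Then none of the degree-$2$ divisors $x_ux_a$, $x_ux_b$, $x_ax_b$ of $uM$ lies in $J$. The assumption $u\nmid f_1$ forces $x_ux_a\ne f_1$ and $x_ux_b\ne f_1$ (any equality would put $u$ into $f_1$), and the assumption that $M$ is not a multiple of $f_1$ gives $x_ax_b\ne f_1$. Hence the coefficients of these three monomials in $\ell Q_1$ are all zero. Genericity of $\ell$ means $a_i\ne 0$ for all $i$.

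Now do the small case analysis. If $a=u$, then the vanishing of the coefficient of $x_u^2$ yields $a_uc_u=0$, contradicting $c_u\ne 0$; the case $b=u$ is identical. Otherwise the vanishing of the $x_ux_a$ and $x_ux_b$ coefficients gives
\[
c_a=-\frac{a_a}{a_u}c_u,\qquad c_b=-\frac{a_b}{a_u}c_u,
\]
each nonzero. Substituting into the third relation (which is $a_ac_b+a_bc_a=0$ when $a\ne b$, and $a_ac_a=0$ when $a=b$) produces $2a_aa_bc_u/a_u=0$ or $-a_a^2c_u/a_u=0$ respectively; in characteristic $0$ both force $c_u=0$, a contradiction. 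This proves (a), and part (b) follows by the identical argument with $f_1,I_1,c$ replaced by $f_2,I_2,d$.

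The only real subtlety is the bookkeeping in the degenerate configurations $a=u$, $b=u$, and $a=b$; once one checks that in each case the three (or two) coefficient relations remain mutually inconsistent with $c_u\ne 0$, the proof is complete. The genericity of $\ell$ enters solely to guarantee $a_i\ne 0$ for every $i$.
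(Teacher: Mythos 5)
Your proof is correct, but it takes a different route from the paper's. The paper deduces part (a) in two lines by invoking its Lemma~\ref{lem1} (with $I_1$ in the role of $I$ and $uM$ in the role of $M$): the hypothesis $c_u\ne 0$ forces $uM\in I_1$, and since $I_1=J+(f_1)$ is a monomial ideal, $uM\in I_1$ means $uM\in J$ or $f_1\mid uM$, the latter being excluded by $u\nmid f_1$ and $f_1\nmid M$. You instead give a self-contained linear-algebra argument: since $(I_1)_2$ is spanned by the degree-$2$ monomials of $J$ together with $f_1$, the coefficient in $\ell Q_1$ of each degree-$2$ divisor of $uM$ (none of which lies in $J$ or equals $f_1$, as you verify) must vanish, and the resulting three relations among $c_u, c_a, c_b$ are inconsistent with $c_u\ne 0$ when all $a_i\ne 0$ and $\operatorname{char}k=0$. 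This is essentially a hands-on proof of the $d=2$ instance of Lemma~\ref{lem1} specialized to this situation, avoiding the appeal to Stanley's WLP theorem for monomial complete intersections; what you lose is reuse (the paper needs Lemma~\ref{lem1} in full generality elsewhere), and what you gain is elementarity and an explicit view of where the characteristic-zero (indeed characteristic $\ne 2$) hypothesis enters, via the relation $2a_aa_bc_u/a_u=0$. Your case bookkeeping ($a=u$, $b=u$, $a=b$) is complete and the genericity of $\ell$ is used only to guarantee $a_i\ne 0$, exactly as in the paper.
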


\begin{proof}
{\rm a.} By  Lemma (\ref{lem1}) applied to $I_1$ in the role of $I$ and $uM$ in the role of $M$, the assumption $c_u\ne 0$ implies that $uM\in I_1$. We note that $L$ in the statement of Lemma (\ref{lem1}) can be replaced by a general linear form $\ell$, so Lemma (\ref{lem1}) can be applied in this context.

Since $I_1=J+(f_1)$ is a monomial ideal, $uM\in I_1$ means $uM\in J$ or $uM$ is a multiple of $f_1$. The latter is not possible given that $u$ does not divide $f_1$ and $M$ is not a multiple of $f_1$.

{\rm b.} Same proof as {\rm a.}, using $I_2$ instead of $I_1$.

\end{proof}

We are now ready to prove Lemma (\ref{th1}).

\begin{proof}
Let $I_1=J+(f_1), I_2=J+(f_2)$. Let $Q_1, Q_2$ be as in the conclusion of Lemma (\ref{conca}). 
Let $Q_1=\sum_u c_u u$, $Q_2=\sum_u d_u u$, so that $Q=\sum_u b_u u$ with $b_u = c_u + d_u$.

Fix $M$ a monomial of degree 2, say $M=uv$ ($u, v$ are variables $\in \{x_1, \ldots, x_n\}$).  We need to show that $MQ\in I$. We will use the following observation:
\begin{equation}\label{sum}
\mathrm{If} \ M'Q\in I\  \mathrm{for\ all} \  M'=uv' \ \mathrm{with} \ v'\ne v ,  \ \mathrm{then}\  MQ\in I.
\end{equation}
Indeed, we have 
$$
a_vMQ=u\ell Q-\sum_{v'\ne v} a_{v'}(uv')Q
$$
where $a_{v'}$ is the coefficient of $v'$ in $\ell$ for every variable $v'$. 
The conclusion follows because the assumption that $\ell $ is general means that we can assume $a_v\ne 0$.

We consider the following cases:

{\bf Case 1:} $\mathrm{gcd}(f_1, f_2)=1$.  Say $f_1=xy, f_2=st$ (we are allowing $x$ and $y$ to possibly be the same, and $s$ and $t$ to possibly be the same).

Let $M$ be a monomial of degree 2. We claim that: 

{\rm 1.} If $M\ne x^2, y^2$, then $Q_1M\in I$, and 

{\rm 2.} If $M\ne s^2, t^2$, then  $Q_2M\in I$. 

Assuming the claim, it follows that $MQ=MQ_1+MQ_2\in I$ for all $M\ne x^2, y^2, s^2, t^2$.
For $M=x^2$,  we can apply observation (\ref{sum}) with $x^2$ in the role of $M$ to obtain $x^2Q \in I$ (since we know $M'Q\in I$ for all $M'=xv'$ with $v'\ne x$) Similarly, we also obtain $y^2Q, s^2Q, t^2Q\in I$, and therefore $MQ\in I$ for all monomials $M$ of degree 2, which is the desired conclusion.

We prove the first part of the claim (the second part is similar). Assume $M\ne x^2, y^2$.

We will show that $uM\in I$ for all $u$ with $c_u\ne 0$. We consider the cases $M=xy$ and $M\ne xy$.

For $M=xy=f_1$ (with $x\ne y$), we have $uM=uf_1\equiv uf_2 \ (\mathrm{mod}\, I)$. Assume by way of contradiction that $uM\notin I$. Then $uf_2\notin I$. However, $c_u\ne 0$ implies $uf_2\in I_1$ by Lemma~\ref{lem1} applied to $I_1$ in the role of $I$ and $uf_2$ in the role of $M$. Since $I_1=J+(f_1)$ is a monomial ideal, $uf_2\in I_1$ implies $uf_2\in J$ or $uf_2$ is a multiple of $f_1$. 
But $uf_2\in J$ implies $uf_2\in I$, since $J\subseteq I$, and $uf_2$ equal to a multiple of $f_1$ implies $\mathrm{gcd}(f_1, f_2)\ne 1$. We get a contradiction in either case.

 Now consider $M\ne xy$. For $u\ne x, y$ we have $uM\in J$  from Lemma~\ref{facts} ({\rm a}).
We will be done once we show that $xM\notin I$ implies $c_x=0$, and $yM\notin I$ implies $c_y=0$. We only prove the first part (the second part is similar).

 Note that $xM\notin I$ implies $xM\notin J$.
If $M$ is not a multiple of $y$, then $xM$ cannot be a multiple of $f_1$, and therefore $xM\notin I_1$. Lemma~\ref{lem1} applied to $I_1$ in the role of $I$ shows $c_x=0$, as desired.  Consider the case when $M$ is a multiple of $y$, say $M=yz$ with $z\ne y$ (since $M\ne y^2$). The assumption $xM\notin I$ implies $xM=xyz\notin J$, and therefore $xz\notin J$. Since $xz$ is not a multiple of $f_1$, this shows $xz\notin I_1$. The assumption $LQ_1\in I_1$ (keeping in mind that $I_1$ is a monomial ideal) means that for each monomial $uv$ of degree 2 which is not in $I$, the coefficient of $uv$ in $LQ$ must be equal to zero. Therefore, we have $c_x+c_z=0$. Thus  $c_x=0\Leftrightarrow c_z=0$.
If $c_z\ne 0$, Lemma~(\ref{lem1}) applied to $I_1$ in the role of $I$ gives $f_2z\in I_1$. Since $\mathrm{gcd}(f_1, f_2)=1$, $f_2z$ cannot be a multiple of $f_1$, and therefore $f_2z\in J$.
But $f_2z\equiv f_1z=xM$ (mod $I$), so $xM\notin I$ implies $f_2z\notin I$, and therefore $f_2z\notin J$. This shows that $c_z$ (and therefore also $c_x$) must be zero.

{\bf Case 2:} $\mathrm{gcd}(f_1, f_2)=x$, and both $f_1$ and $f_2$ are not squares.  Say $f_1=xy, f_2=xt$ with $x, y, t$  all distinct.

We claim that for $M$=monomial of degree 2, if $M\ne x^2, y^2, t^2$ then $Q_1M, Q_2M\in I$, and therefore $QM=Q_1M+Q_2M\in I$.  Once we prove the claim, it will also  follow that $QM\in I$ for $M=x^2, y^2, t^2$ using observation (\ref{sum}).

Let $M\ne x^2, y^2, t^2$. We prove $Q_1M\in I$ ($Q_2M\in I$ is similar) by proving that for all $u$ with $c_u\ne 0$, we have $uM\in I$. We consider the cases $M=f_1$ and $M\ne f_1$.

Let $M=xy=f_1$. Assume by way of contradiction that $c_u\ne 0$ and $uM\notin I$. Since $uf_1\equiv -uf_2$ (mod $I$), it follows that $uf_2\notin I$, and therefore $uf_2\notin J$. However, $c_u\ne 0$ implies $uf_2\in I_1$ by Lemma~\ref{lem1} applied to $I_1$ in the role of $I$.  Therefore, $uf_2$ must be a multiple of $f_1$. This implies $u=y$, so $uM=xy^2$. Note that $xy^2\notin I$ implies $y^2\notin J$, and since $y^2$ is not a multiple of $f_1$, we must also have $y^2\notin I_1$. The assumption $LQ\in I_1$ now implies that the coefficient of $y^2$ in $LQ$ must be 0. This coefficient is $c_y$, so we contradict the assumption that $c_u=0$.

Let $M\ne f_1$. Lemma~\ref{facts} part a. implies that $uM\in I$ for all $u\ne x, y$ with $c_u\ne 0$. It remains to show that if $c_x\ne 0$, then $xM\in I$, and if $c_y\ne 0$, then $yM\in I$.

 Assume $c_x\ne 0$ and assume by way of contradiction that $xM\notin I$. By Lemma~\ref{lem1} applied to $I_1$ in the role of $I$, we have $xM\in I_1$. The only way this is possible is if $xM$ is a multiple of $f_1$, and therefore $M$ is a multiple of $y$.
Say $M=uy$ with $u\ne x, y$ (since we already handles the case $M=xy$, and we are assuming $M\ne y^2$). 
So we have $xM=uxy\notin J$, which implies $ux\notin J$ and thus $ux\notin I_1$. The coefficient of $ux$ in $LQ$ is $c_u+c_x$, so $LQ\in I_1$ implies $c_u+c_x=0$. Since $c_x\ne 0$, we also have $c_u\ne 0$. Lemma~\ref{lem1} applied to $I_1$ in the role of $I$ now implies $uf_2\in I_1$.
We have $uf_2\equiv uf_1=xM$ (mod $I$), so $xM\notin I$ implies $uf_2\notin I$, and therefore $uf_2\notin J$. Thus, $uf_2\in I_1$ implies that $uf_2$ must be a multiple of $f_1$. Therefore we must have $u=y$, and $M=y^2$, which contradicts our assumption.

Assume $c_y\ne 0$ and assume by way of contradiction that  $yM\notin I$.  By Lemma~\ref{lem1} applied to $I_1$ in the role of $I$, we have $yM\in I_1$, and therefore $yM$ must be a multiple of $f_1$, which means that $M$ must be a multiple of $x$.
 Say $M=ux$, with $u\ne x$ (since we are assuming $M\ne x^2$). Since $yM=yux\notin I$, it follows that $uy\notin J$, and since $uy$ is not a multiple of $f_1$, $uy\notin I_1$. The coefficient of $uy$ in $LQ$ is $c_u+c_y$, so $LQ\in I_1$ implies $c_u+c_y=0$.  Since $c_y\ne 0$, we also have $c_u\ne 0$.
Lemma~\ref{lem1} applied to $I_1$ in the role of $I$ implies $uf_2\in I_1$. We have $uf_2\equiv uf_1=yM$ (mod $I$), so the assumption $yM\notin I$ implies $uf_2\notin I$, and therefore $uf_2\notin J$. It follows that $uf_2$ must be a multiple of $f_1$, and so $u=y$ and $M=xy=f_1$. We have already proved earlier that $yf_1\in I$.

{\bf Case 3:} $f_1, f_2$ have a variable in common, and one of them is a square. Say $f_1=x^2, f_2=xy$ (with $x\ne y$).

We  claim that for every monomial $M$ of degree 2, if $M\ne x^2, y^2$, then  $MQ_1, MQ_2\in I$ (and therefore $MQ=MQ_1+MQ_2\in I$).

Once we prove the claim, we can use observation (\ref{sum}) for $M=x^2$ and $M=y^2$, and it will follow that $MQ\in I$ for all $M\in P_2$.

We begin by proving the first part of the claim ($MQ_1\in I$ for $M\ne x^2, y^2$) by proving that $uM\in I$ for every variable $u$ with $c_u\ne 0$.

 If $u\ne x$, Lemma~(\ref{facts}) part a. implies $uM\in J$, and therefore $uM\in I$. It remains to consider $u=x$.
 Assume by way of contradiction $c_x\ne 0$ and $xM\notin I$.  By Lemma~\ref{lem1} applied to $I_1$ in the role of $I$, we have $xM\in I_1$, and therefore $xM$ must be a multiple of $f_1=x^2$. This means that $M$ is a multiple of $x$. Say that $M=xw$ with $w\ne x$. The assumption $xM\notin I$ implies $xw\notin J$, and since $xw$ is not a multiple of $f_1$, $xw\notin I_1$. The coefficient of $xw$ in $LQ_1$ is $c_x+c_w$, and the assumption that $LQ_1\in I_1$ implies $c_x+c_w=0$. Since we are assuming $c_x\ne 0$, $c_w$ is also not zero.

 We have $xM=f_1w\equiv f_2w \ (\mathrm{mod} \ I)$. So $xM\notin I$ implies $f_2w\notin J$. On the other hand, since  $c_w\ne 0$, Lemma~\ref{lem1} implies that $f_2w\in I_1$, and therefore $f_2w$ must be a multiple of $f_1$. This is not possible (since $w, y\ne x$).

Now prove the second part of the claim ($MQ_2\in I$ for all $M\ne x^2, y^2$) by proving that $uM\in I$ for every variable $u$ with $d_u\ne 0$.

 For $u\ne x, y$ with $d_u\ne 0$, we know that $uM\in J$ by Lemma~\ref{facts} part b. It remains to consider $u=x$ and $u=y$.

For $u=x$: assume  by way of contradiction $d_x\ne 0$ and $xM\notin I$. By Lemma ~\ref{lem1} applied to $I_2$ in the role of $I$, we have $xM\in I_2$, and therefore $xM$ must be a multiple of $f_2$. This implies that  $M$ must be a multiple of $y$. Say $M=yw$ with $w\ne y$. The assumption $xM=xyw\notin I$ implies $xw\notin J$, and since $xw$ is not a multiple of $f_2(=xy)$, we also  have $xw\notin I_2$. The coefficient of $xw$ in $LQ_2$ is $d_x+d_w$, and the assumption that $LQ_2\in I_2$ implies $d_x+d_w=0$. Therefore $d_x\ne 0$ implies $d_w\ne 0$. We have $xM=wf_2\equiv wf_1 \ (\mathrm{mod} \ I)$, so $wf_1\notin J$; however, $d_w\ne 0$ implies $wf_1\in I_2$, so $wf_1$ must be a multiple of $f_2$. This is not possible (since $w\ne y$).

For $u=y$: assume  by way of contradiction $d_y\ne 0$ and $yM\notin I$. By Lemma~\ref{lem1} applied to $I_2$ in the role of $I$, we have $yM\in I_2$, and therefore $yM$ must be a multiple of $f_2$. This implies that  $M$ must be a multiple of $x$. Say $M=xw$, with $w\ne x$. Assume by contradiction $yM=xyw\notin I$. Then $yw\notin J$, and since $yw$ is not a multiple of $f_2$, we have $yw\notin I_2$. The coefficient of $yw$ in $L_2Q$ is $d_y+d_w$, and the assumption $LQ_2\in I_2$ implies $d_y+d_w=0$. Therefore $d_y\ne 0$ implies $d_w\ne 0$. We have $yM=wf_2\equiv wf_1 \ (\mathrm{mod} \ I)$, so $wf_1\notin J$; however, $d_w\ne 0$ implies $wf_1\in I_2$, so $wf_1$ must be a multiple of $f_2$. The only way that this is possible is if $w=y$. But if $w=y$ we have $yM=xy^2$ and the assumption $yM\notin I$ implies $y^2\notin J$. Since $y^2$ is not a multiple of $xy$, we also have $y^2\notin I_2$. 
The coefficient of $y^2$ in $LQ_2$ is $d_y$, so the assumption that $L_2Q\in I_2$ implies $d_y=0$. This is a contradiction.

\end{proof}

\end{document}